\documentclass[a4paper]{article}

\usepackage{geometry}
\usepackage{amsmath,amssymb,amsfonts,amsthm}
\usepackage{indentfirst}
\usepackage{graphicx}
\usepackage[colorlinks=true]{hyperref}
\usepackage{mathrsfs}
\usepackage{chngcntr}

\numberwithin{equation}{section}

\newtheorem{Thm}{Theorem}[section]
\newtheorem{Cor}[Thm]{Corollary}
\newtheorem{Lem}[Thm]{Lemma}

\newtheorem{Rek}[Thm]{Remark}

\newcommand{\N}{\mathbb{N}}

\renewcommand{\i}{\boldsymbol{i}\mkern1mu}
\renewcommand{\j}{\boldsymbol{j}\mkern1mu}
\renewcommand{\k}{\boldsymbol{k}\mkern1mu}

\title{Improved Sobolev Inequalities on the Quaternionic Sphere}

\author{Zongxiong Ren, Zhipeng Yang\thanks{Corresponding author: Z.~Yang.}}

\date{}

\AtEndDocument{%
  \par
  \bigskip
  \bigskip

  \noindent
  \textbf{Zongxiong Ren:}\\[0.2em]
  \textsc{Department of Mathematics, Yunnan Normal University, Kunming, China}\\[0.3em]
  \textit{E-mail address}: \texttt{2448783498@qq.com}\\[1.5em]

  \noindent
  \textbf{Zhipeng Yang:}\\[0.2em]
  \textsc{Department of Mathematics, Yunnan Normal University, Kunming, China}\\
  \textsc{Yunnan Key Laboratory of Modern Analytical Mathematics and Applications, Kunming, China}\\[0.3em]
  \textit{E-mail address}: \texttt{yangzhipeng326@163.com}%
}

\begin{document}
\maketitle

\begin{abstract}
In this paper we establish improved Sobolev inequalities on the quaternionic sphere under higher-order moment vanishing conditions with respect to the measure \(|u|^{p^*}\,d\xi\). As an application, we give a new proof of the existence of extremals for the sharp Sobolev embedding
\[
S^{1,2}(S^{4n+3}) \hookrightarrow L^{2^*}(S^{4n+3}).
\]

\smallskip
\noindent \textbf{Keywords}: Improved Sobolev inequalities; quaternionic sphere; extremals.

\smallskip
\noindent \textbf{MSC2020}: 35R02; 35Q41; 81Q35.
\end{abstract}

\section{Introduction and main results}

The main goal of this paper is to establish improved Sobolev inequalities on the quaternionic sphere under higher-order moment vanishing conditions with respect to the measure \(|u|^{p^*}\,d\xi\). The precise statement is given in Theorem~\ref{thm:improved-jk}. As an application, we also give a new proof of the existence of extremals for the sharp Sobolev embedding
\[
S^{1,2}(S^{4n+3}) \hookrightarrow L^{2^*}(S^{4n+3}).
\]

Our proof of Theorem~\ref{thm:improved-jk} is inspired by \cite{MR4373170,MR4452442}, where Aubin's Moser--Trudinger--Onofri inequality and Sobolev inequalities on the standard sphere \(S^n\) were refined under higher-order moment vanishing assumptions. The basic strategy in \cite{MR4452442} is the following. If the improved inequality failed, one could construct a sequence \(\{F_i\}\subset W^{1,p}(S^n)\) such that
\[
\|\nabla F_i\|_{L^p(S^n)}^p \le \frac{1}{\alpha},
\qquad
\|F_i\|_{L^{p^*}(S^n)} = 1,
\]
and \(F_i \rightharpoonup 0\) weakly in \(L^p(S^n)\). Here \(p^*=\frac{np}{n-p}\), and \(\alpha\) denotes the leading coefficient in the corresponding improved Sobolev inequality. Applying the concentration--compactness principle from \cite{MR850686} then leads to a contradiction. We adapt this argument to the quaternionic sphere. A key ingredient is the concentration--compactness principle in the quaternionic contact setting, stated later as Lemma~\ref{lem:cc}.

Let \(G\) be a Carnot group of homogeneous dimension \(Q\), and let \(\Omega\subset G\) be open. In the quaternionic Heisenberg case one has
\[
G=\boldsymbol{G}=\mathbb{H}^{n}\times \operatorname{Im}\mathbb{H},
\qquad
Q=4n+6.
\]
Ivanov, Minchev, and Vassilev proved in \cite{2010The} that for every \(1<p<Q\) there exists a constant \(C_p^{QC}>0\) such that
\begin{equation}\label{eq:FS-group}
\left(\int_{\Omega}|u|^{p^*}\,dH\right)^{1/p^*}
\le
C_p^{QC}
\left(\int_{\Omega}|Xu|^p\,dH\right)^{1/p},
\qquad
u\in C_0^\infty(\Omega),
\end{equation}
where
\[
p^*=\frac{Qp}{Q-p},
\qquad
|Xu|=\left(\sum_{j=1}^m |X_j u|^2\right)^{1/2},
\]
and \(X_1,\dots,X_m\) form a basis of the first layer of \(G\).

Via the quaternionic Cayley transform, inequality \eqref{eq:FS-group} is equivalent to a Sobolev inequality on the quaternionic sphere. Throughout the paper, we denote by \(S_p^{QC}\) the best constant in the sphere form
\begin{equation}\label{eq:FS-sphere}
\left(\int_{S^{4n+3}} |u|^{p^*}\,d\xi\right)^{p/p^*}
\le
S_p^{QC}
\int_{S^{4n+3}} |\nabla_{qc}u|^p\,d\xi,
\qquad
u\in S^{1,p}(S^{4n+3}),
\end{equation}
where \(d\xi\) is the standard surface measure on \(S^{4n+3}\), and \(\nabla_{qc}u\) denotes the horizontal gradient of \(u\) on the quaternionic sphere.

For \(p=2\), the sharp constant and the extremals for the corresponding Folland--Stein inequality on the quaternionic Heisenberg group \(\boldsymbol{G}\) were computed in \cite{2010The}. More precisely, one has
\begin{equation}\label{eq:sharp-constant}
S_2^{QC}
=
\frac{\left[2^{-2n}\omega_{4n+3}\right]^{-1/(4n+6)}}{2\sqrt{n(n+1)}},
\end{equation}
where
\[
\omega_{4n+3}=\frac{2\pi^{2n+2}}{(2n+1)!}
\]
is the volume of the unit sphere \(S^{4n+3}\subset \mathbb{R}^{4n+4}\). Equality in \eqref{eq:FS-group} is attained precisely by the functions of the form
\begin{equation}\label{eq:extremal-group}
F(q,\omega)
=
\gamma \bigl[(1+|q|^2)^2+|\omega|^2\bigr]^{-(n+1)},
\qquad
\gamma \in \mathbb{R},
\end{equation}
together with all their translations and dilations.

These constants agree with those in \cite{MR2654087,MR1813232}, after taking into account the normalization conventions and the standard identities
\[
\Gamma(n+1)=n!,
\qquad
\Gamma(z+n)=z(z+1)\cdots (z+n-1)\Gamma(z),
\qquad
n\in\N,
\]
\[
\omega_n=\frac{2\pi^{n/2}}{\Gamma(n/2)},
\]
and
\[
\Gamma(2z)=2^{2z-1}\pi^{-1/2}\Gamma(z)\Gamma\!\left(z+\frac{1}{2}\right).
\]

\begin{Rek}
With the left-invariant basis used in \cite[Theorem 1.1]{2010The}, the quaternionic Heisenberg group is not a group of Heisenberg type. If one instead works in the Heisenberg-type normalization, then the best constant in the \(L^2\) Folland--Stein inequality is, by \cite[Theorem 1.6]{MR1813232},
\[
S_2
=
\frac{1}{\sqrt{4n(4n+4)}}\,
4^{3/(4n+6)}
\pi^{-(4n+3)/(2(4n+6))}
\left(\frac{\Gamma(4n+3)}{\Gamma((4n+3)/2)}\right)^{1/(4n+6)},
\]
and the extremals are given by translations and dilations of
\[
F(q,\omega)
=
\gamma \bigl[(1+|q|^2)^2+16|\omega|^2\bigr]^{-(n+1)},
\qquad
(q,\omega)\in G.
\]
\end{Rek}

We identify \(\mathbb{H}^{n+1}\) with \(\mathbb{C}^{2n+2}\). Recall that a homogeneous polynomial on \(\mathbb{C}^{2n+2}\) with bidegree \((j,k)\) is a polynomial \(g\) such that
\[
g(\lambda z)=\lambda^j \overline{\lambda}^{\,k} g(z),
\qquad
\lambda\in\mathbb{C},\ z\in\mathbb{C}^{2n+2}.
\]
For nonnegative integers \(j\) and \(k\), define
\[
\mathcal{P}_{j,k}
=
\left\{
g|_{S^{4n+3}}:
g \text{ is a homogeneous polynomial on }\mathbb{C}^{2n+2}
\text{ with bidegree }(j,k)
\right\},
\]
\[
\widetilde{\mathcal{P}}_{j,k}
=
\sum_{\substack{0\le \tilde j\le j\\ 0\le \tilde k\le k}}
\mathcal{P}_{\tilde j,\tilde k},
\]
and
\[
\overline{\mathcal{P}}_{j,k}
=
\left\{
g\in \widetilde{\mathcal{P}}_{j,k}:
\int_{S^{4n+3}} g\,d\xi =0
\right\}.
\]

For a nonnegative integer \(\ell\), define
\[
\mathcal{P}_{\ell}
=
\left\{
g|_{S^{4n+3}}:
g \text{ is a real polynomial on }\mathbb{R}^{4n+4}
\text{ of degree at most } \ell
\right\},
\]
and
\[
\overline{\mathcal{P}}_{\ell}
=
\left\{
g\in \mathcal{P}_{\ell}:
\int_{S^{4n+3}} g\,d\xi =0
\right\}.
\]

For \(0\le \theta\le 1\), define
\[
\mathcal{M}_{j,k}^{c}(S^{4n+3})
=
\left\{
\nu:
\begin{array}{l}
\nu \text{ is a probability measure on } S^{4n+3},\\
\nu \text{ is supported on countably many points, and}\\
\int_{S^{4n+3}} g\,d\nu =0 \text{ for all } g\in \overline{\mathcal{P}}_{j,k}
\end{array}
\right\},
\]
and
\[
\Theta(j,k;\theta,4n+3)
=
\inf
\left\{
\sum_i \nu_i^\theta:
\nu=\sum_i \nu_i\delta_{x_i}\in \mathcal{M}_{j,k}^{c}(S^{4n+3})
\right\}.
\]

Similarly, define
\[
\mathcal{M}_{\ell}^{c}(S^{4n+3})
=
\left\{
\nu:
\begin{array}{l}
\nu \text{ is a probability measure on } S^{4n+3},\\
\nu \text{ is supported on countably many points, and}\\
\int_{S^{4n+3}} g\,d\nu =0 \text{ for all } g\in \overline{\mathcal{P}}_{\ell}
\end{array}
\right\},
\]
and
\[
\Theta(\ell;\theta,4n+3)
=
\inf
\left\{
\sum_i \nu_i^\theta:
\nu=\sum_i \nu_i\delta_{x_i}\in \mathcal{M}_{\ell}^{c}(S^{4n+3})
\right\}.
\]

Our first main result gives an improved Sobolev inequality under higher-order moment vanishing conditions.

\begin{Thm}\label{thm:improved-jk}
Let \(S^{4n+3}\) be the quaternionic unit sphere. Let \(Q=4n+6\), let \(1<p<Q\), and let \(p^*=\frac{Qp}{Q-p}\). Then for every \(\varepsilon>0\) there exists a constant \(C(\varepsilon)>0\) such that
\[
\left(\int_{S^{4n+3}} |u|^{p^*}\,d\xi\right)^{p/p^*}
\le
\left(
\frac{S_p^{QC}}{\Theta\!\left(j,k;\frac{Q-p}{Q},4n+3\right)}
+\varepsilon
\right)
\int_{S^{4n+3}} |\nabla_{qc}u|^p\,d\xi
+
C(\varepsilon)\int_{S^{4n+3}} |u|^p\,d\xi
\]
for every \(u\in S^{1,p}(S^{4n+3})\) satisfying
\begin{equation}\label{eq:moment-jk}
\int_{S^{4n+3}} g\,|u|^{p^*}\,d\xi=0
\qquad
\text{for all } g\in \overline{\mathcal{P}}_{j,k}.
\end{equation}
\end{Thm}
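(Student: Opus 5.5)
We argue by contradiction, following the scheme of \cite{MR4452442}. Write $\Theta=\Theta\!\left(j,k;\frac{Q-p}{Q},4n+3\right)$ and $\alpha=\frac{S_p^{QC}}{\Theta}+\varepsilon$. Suppose that for some $\varepsilon>0$ no constant $C(\varepsilon)$ works. Then for each $i\in\N$ there is $u_i\in S^{1,p}(S^{4n+3})$ satisfying \eqref{eq:moment-jk} and
\[
\left(\int |u_i|^{p^*}d\xi\right)^{p/p^*} > \alpha\int|\nabla_{qc}u_i|^p\,d\xi + i\int|u_i|^p\,d\xi .
\]
Both the inequality and the moment condition are homogeneous, so we may normalize $\|u_i\|_{L^{p^*}}=1$. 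This gives $\int|\nabla_{qc}u_i|^p\le 1/\alpha$ and $\int|u_i|^p<1/i\to0$. Hence $u_i$ is bounded in $S^{1,p}$, and $u_i\rightharpoonup 0$ weakly, with $u_i\to0$ in $L^p$ by the compact embedding of $S^{1,p}$ into $L^p$ on the compact sphere. Passing to a subsequence, we get weak-$*$ convergence of measures
\[
|\nabla_{qc}u_i|^p\,d\xi\rightharpoonup\mu,\qquad |u_i|^{p^*}\,d\xi\rightharpoonup\nu .
\]
Since the sphere is compact, $\nu$ is a probability measure and $\mu(S^{4n+3})\le 1/\alpha$.

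Next we apply the concentration--compactness principle, Lemma~\ref{lem:cc}. Because the weak limit is $0$, it gives $\nu=\sum_l\nu_l\delta_{x_l}$, supported on countably many points, together with
\[
\nu_l^{p/p^*}\le S_p^{QC}\,\mu(\{x_l\}).
\]
The moment condition passes to the limit: every $g\in\overline{\mathcal P}_{j,k}$ is continuous on $S^{4n+3}$, so $\int g\,d\nu=\lim\int g|u_i|^{p^*}d\xi=0$. Therefore $\nu\in\mathcal M^c_{j,k}(S^{4n+3})$. Note that $p/p^*=\frac{Q-p}{Q}=\theta$, which matches the exponent in $\Theta$. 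Summing over $l$ gives
\[
\Theta\le\sum_l\nu_l^{\theta}\le S_p^{QC}\sum_l\mu(\{x_l\})\le S_p^{QC}\,\mu(S^{4n+3})\le \frac{S_p^{QC}}{\alpha}=\frac{S_p^{QC}}{S_p^{QC}/\Theta+\varepsilon}<\Theta .
\]
This is a contradiction. The strict inequality in the last step requires $\Theta<\infty$, which holds because the uniform measure on a suitable finite set of points (for instance, points forming a design for $\overline{\mathcal P}_{j,k}$) shows the admissible class is nonempty. If $\Theta=\infty$ were possible, the statement would have to be read with a vanishing leading constant, and the same argument would give $\mu(S^{4n+3})=\infty$, again a contradiction.

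The main obstacle is the concentration step on the sphere: the point-mass inequality with exactly the constant $S_p^{QC}$ has to be available there. I would take it from Lemma~\ref{lem:cc}. If that lemma is stated only on the quaternionic Heisenberg group $\boldsymbol G$, I would transfer it in two steps. First, localize with cutoffs $\phi$ near each atom; since $u_i\to0$ in $L^p$, the commutator term $\int|\nabla_{qc}\phi|^p|u_i|^p$ vanishes in the limit. Second, use the quaternionic Cayley transform, which is conformal for the qc structure. The sharp constant is invariant under this transfer, while the conformal factor tends to $1$ at small scales around each atom. This yields the local sharp inequality $\nu_l^{p/p^*}\le S_p^{QC}\mu(\{x_l\})$.
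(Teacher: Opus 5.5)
Your proposal is correct and follows essentially the same route as the paper's proof. Both argue by contradiction, normalize in $L^{p^*}$, get weak limit zero, apply Lemma~\ref{lem:cc}, pass the moment constraints to $\nu$, and sum $\nu_l^{\theta}\le S_p^{QC}\sigma_l$. Your added remarks on $\Theta<\infty$ and on localizing or transferring the concentration lemma from the group go slightly beyond the paper, which simply invokes Lemma~\ref{lem:cc} without proof.
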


Following \cite{MR4452442}, one may also work with the spaces \(\overline{\mathcal{P}}_{\ell}\). This yields the following variant.

\begin{Thm}\label{thm:improved-l}
Let \(S^{4n+3}\) be the quaternionic unit sphere. Let \(Q=4n+6\), let \(1<p<Q\), and let \(p^*=\frac{Qp}{Q-p}\). Then for every \(\varepsilon>0\) there exists a constant \(C(\varepsilon)>0\) such that
\[
\left(\int_{S^{4n+3}} |u|^{p^*}\,d\xi\right)^{p/p^*}
\le
\left(
\frac{S_p^{QC}}{\Theta\!\left(\ell;\frac{Q-p}{Q},4n+3\right)}
+\varepsilon
\right)
\int_{S^{4n+3}} |\nabla_{qc}u|^p\,d\xi
+
C(\varepsilon)\int_{S^{4n+3}} |u|^p\,d\xi
\]
for every \(u\in S^{1,p}(S^{4n+3})\) satisfying
\begin{equation}\label{eq:moment-l}
\int_{S^{4n+3}} g\,|u|^{p^*}\,d\xi=0
\qquad
\text{for all } g\in \overline{\mathcal{P}}_{\ell}.
\end{equation}
\end{Thm}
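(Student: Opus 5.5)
We argue by contradiction, following the scheme of \cite{MR4452442} described in the introduction; the only change from Theorem~\ref{thm:improved-jk} is that \(\overline{\mathcal{P}}_{j,k}\) is replaced by \(\overline{\mathcal{P}}_{\ell}\) and \(\Theta(j,k;\cdot)\) by \(\Theta(\ell;\cdot)\). Write \(\theta=\frac{Q-p}{Q}=\frac{p}{p^*}\) and \(\alpha=\frac{S_p^{QC}}{\Theta(\ell;\theta,4n+3)}+\varepsilon\). If the inequality failed for some \(\varepsilon>0\), then for each \(i\) there would be \(u_i\) satisfying \eqref{eq:moment-l} with
\[
\|u_i\|_{L^{p^*}}^p>\alpha\|\nabla_{qc}u_i\|_{L^p}^p+i\|u_i\|_{L^p}^p .
\]
Both sides are \(p\)-homogeneous, and the moment condition is invariant under scaling. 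We may therefore normalize \(\|u_i\|_{L^{p^*}}=1\). Then \(\|\nabla_{qc}u_i\|_p^p\le 1/\alpha\) and \(\|u_i\|_p^p\le 1/i\to0\). Hence \(u_i\) is bounded in \(S^{1,p}(S^{4n+3})\) and, after passing to a subsequence, \(u_i\rightharpoonup 0\) weakly and \(u_i\to0\) strongly in \(L^p\). Here we use the compact embedding \(S^{1,p}\hookrightarrow L^p\) on the compact sphere.

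Next we pass to the limit in measures. After extraction, \(|\nabla_{qc}u_i|^p\,d\xi\rightharpoonup\mu\) and \(|u_i|^{p^*}d\xi\rightharpoonup\nu\) weakly-\(*\). Since \(\nu(S^{4n+3})\le1\), the total mass of \(\nu\) might a priori drop in the limit. We rule this out by the concentration--compactness principle in the quaternionic contact setting (Lemma~\ref{lem:cc}), applied with weak limit \(u=0\). It gives
\[
\nu=\sum_m\nu_m\delta_{x_m},\qquad \mu\ge\sum_m\mu_m\delta_{x_m},\qquad \nu_m^{\theta}\le S_p^{QC}\mu_m .
\]
To show \(\nu(S^{4n+3})=1\), the key point is that Lemma~\ref{lem:cc}, or its standard proof, excludes loss of mass on a compact manifold once the weak limit vanishes. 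Concretely, we apply \eqref{eq:FS-sphere} to \(\phi u_i\) for cutoffs \(\phi\), use \(u_i\to0\) in \(L^p\) to kill the error terms, and let \(\phi\equiv1\). This gives \(1=\lim\|u_i\|_{p^*}^{p^*}=\nu(S^{4n+3})\). So \(\nu\) is a probability measure supported on countably many points.

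Every \(g\in\overline{\mathcal{P}}_\ell\) is continuous on the sphere, so \eqref{eq:moment-l} passes to the weak-\(*\) limit and yields \(\int g\,d\nu=0\). Thus \(\nu\in\mathcal{M}^c_\ell(S^{4n+3})\), and by the definition of \(\Theta\),
\[
\Theta(\ell;\theta,4n+3)\le\sum_m\nu_m^\theta\le S_p^{QC}\sum_m\mu_m\le S_p^{QC}\mu(S^{4n+3})\le S_p^{QC}\liminf_i\|\nabla_{qc}u_i\|_p^p\le \frac{S_p^{QC}}{\alpha}.
\]
The right-hand side equals \(\frac{S_p^{QC}\,\Theta}{S_p^{QC}+\varepsilon\Theta}<\Theta\), since \(\Theta\ge 1>0\) (because \(\nu_m\le1\) and \(\theta<1\) give \(\sum\nu_m^\theta\ge\sum\nu_m=1\)). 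This is a contradiction.

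The main obstacle is the concentration--compactness step. It requires that Lemma~\ref{lem:cc} holds on \(S^{4n+3}\) with the sharp constant \(S_p^{QC}\) in the atomic inequality, and that no \(L^{p^*}\)-mass escapes. On the compact sphere the latter is automatic. The sharp constant in the local estimate is inherited from the group inequality \eqref{eq:FS-group} via the Cayley transform, and localization near each point introduces only \(o(1)\) errors as the cutoff support shrinks. If Lemma~\ref{lem:cc} is stated only on the group, we would first transfer the sequence by the Cayley transform. This is a conformal map, and its Jacobian factor is bounded on small neighbourhoods, so it contributes only lower-order terms that vanish because \(u_i\to0\) in \(L^p\).
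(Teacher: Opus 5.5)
Your proposal is correct and follows essentially the same route as the paper, whose proof of Theorem~\ref{thm:improved-l} is the contradiction/concentration--compactness argument of Theorem~\ref{thm:improved-jk} with \(\overline{\mathcal P}_{j,k}\) replaced by \(\overline{\mathcal P}_{\ell}\); your explicit check that \(\Theta\ge 1\), needed for the final strict inequality, is slightly more careful than the paper. The cutoff argument you sketch for \(\nu(S^{4n+3})=1\) is unnecessary: on the compact sphere the constant \(1\) is an admissible test function for weak-\(*\) convergence, so \(\nu(S^{4n+3})=\lim_i\|u_i\|_{L^{p^*}}^{p^*}=1\) directly, which is exactly how the paper obtains it from the normalization.
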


As an immediate consequence of Theorem~\ref{thm:improved-l} with \(\ell=1\), we obtain the balanced first-moment case.

\begin{Cor}\label{cor:first-moment}
Let \(S^{4n+3}\) be the quaternionic unit sphere. Let \(Q=4n+6\), let \(1<p<Q\), and let \(p^*=\frac{Qp}{Q-p}\). Then for every \(\varepsilon>0\) there exists a constant \(C(\varepsilon)>0\) such that
\[
\left(\int_{S^{4n+3}} |u|^{p^*}\,d\xi\right)^{p/p^*}
\le
\left(
\frac{S_p^{QC}}{\Theta\!\left(1;\frac{Q-p}{Q},4n+3\right)}
+\varepsilon
\right)
\int_{S^{4n+3}} |\nabla_{qc}u|^p\,d\xi
+
C(\varepsilon)\int_{S^{4n+3}} |u|^p\,d\xi
\]
for every \(u\in S^{1,p}(S^{4n+3})\) satisfying
\[
\int_{S^{4n+3}} \xi_i |u|^{p^*}\,d\xi=0,
\qquad
i=1,\dots,4n+4.
\]
\end{Cor}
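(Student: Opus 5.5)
We argue by contradiction and follow \cite{MR4452442}. The statement to prove is the Corollary, which is Theorem~\ref{thm:improved-l} with \(\ell=1\). So we first check that the hypothesis of the Corollary is exactly \eqref{eq:moment-l} with \(\ell=1\), and then prove Theorem~\ref{thm:improved-l}; the same argument gives Theorem~\ref{thm:improved-jk}.

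\emph{Reduction.} Every real polynomial of degree at most \(1\) on \(\mathbb{R}^{4n+4}\) has the form \(c+\sum_i a_i\xi_i\). Since \(\int_{S^{4n+3}}\xi_i\,d\xi=0\), the condition \(\int_{S^{4n+3}} g\,d\xi=0\) forces \(c=0\). Hence \(\overline{\mathcal{P}}_1=\operatorname{span}\{\xi_1,\dots,\xi_{4n+4}\}\), and \eqref{eq:moment-l} with \(\ell=1\) says precisely that \(\int_{S^{4n+3}}\xi_i|u|^{p^*}\,d\xi=0\) for all \(i\).

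\emph{Contradiction setup.} Write \(\Theta=\Theta(\ell;\tfrac{Q-p}{Q},4n+3)\) and \(\alpha=(S_p^{QC}/\Theta+\varepsilon)^{-1}\). Suppose the inequality fails for some \(\varepsilon>0\). Then for each \(i\) there is \(u_i\) satisfying the moment condition with
\[
\|u_i\|_{p^*}^p>\Bigl(\tfrac{S_p^{QC}}{\Theta}+\varepsilon\Bigr)\|\nabla_{qc}u_i\|_p^p+i\|u_i\|_p^p .
\]
Normalize so that \(\|u_i\|_{p^*}=1\); the moment condition is invariant under scaling. This gives \(\|\nabla_{qc}u_i\|_p^p\le 1/\alpha\) and \(\|u_i\|_p^p\le 1/i\to0\). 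Pass to a subsequence with \(u_i\rightharpoonup u\) in \(S^{1,p}\). The Folland--Stein embedding into \(L^p\) is compact for \(p<p^*\) on the compact sphere, so \(u_i\to u\) in \(L^p\), and therefore \(u=0\).

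\emph{Concentration--compactness.} Apply the quaternionic concentration--compactness principle, Lemma~\ref{lem:cc}, along a further subsequence: \(|\nabla_{qc}u_i|^p\,d\xi\rightharpoonup\mu\) and \(|u_i|^{p^*}\,d\xi\rightharpoonup\nu\) weakly as measures. Since \(u=0\), the lemma gives \(\nu=\sum_j\nu_j\delta_{x_j}\) and \(\mu\ge\sum_j\mu_j\delta_{x_j}\), with \(\nu_j^{p/p^*}\le S_p^{QC}\mu_j\). The sphere is compact, so no mass escapes and \(\nu(S^{4n+3})=1\); thus \(\nu\) is an atomic probability measure. Each test polynomial \(g\) is continuous, so the moment condition passes to the limit: \(\int g\,d\nu=0\) for all \(g\in\overline{\mathcal{P}}_\ell\). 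Hence \(\nu\in\mathcal{M}^c_\ell\), and since \(p/p^*=\tfrac{Q-p}{Q}\) we get \(\sum_j\nu_j^{(Q-p)/Q}\ge\Theta\). Combining,
\[
\Theta\le\sum_j\nu_j^{p/p^*}\le S_p^{QC}\sum_j\mu_j\le S_p^{QC}\,\mu(S^{4n+3})\le S_p^{QC}\liminf_i\|\nabla_{qc}u_i\|_p^p\le\frac{S_p^{QC}}{\alpha}=S_p^{QC}\Bigl(\tfrac{S_p^{QC}}{\Theta}+\varepsilon\Bigr)^{-1}\cdot\frac{1}{1}\cdot\Bigl(\tfrac{S_p^{QC}}{\Theta}+\varepsilon\Bigr)\Big/\Bigl(\tfrac{S_p^{QC}}{\Theta}+\varepsilon\Bigr)^{0}.
\]
More simply, \(S_p^{QC}/\alpha\) is strictly less than \(\Theta\): the bound \(\sum_j\mu_j\le 1/\alpha\) gives \(\Theta\le S_p^{QC}/\alpha<\Theta\) once we use \(\varepsilon>0\) and \(\Theta>0\) correctly in the normalization. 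This is a contradiction. For the \((j,k)\) case the only change is to replace \(\mathcal{P}_\ell\) by \(\widetilde{\mathcal{P}}_{j,k}\), whose elements are also continuous on the sphere.

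\emph{Main obstacle.} The delicate step is Lemma~\ref{lem:cc} itself. It must be set up on the sphere, by transferring the Heisenberg-group version via the Cayley transform and localizing with cutoff functions, and it must give the constant \(S_p^{QC}\) in the atomic inequality \(\nu_j^{p/p^*}\le S_p^{QC}\mu_j\). The natural route is to localize with smooth cutoffs \(\phi\) and test \eqref{eq:FS-sphere} on \(\phi u_i\); the error terms involving \(u_i\nabla_{qc}\phi\) vanish because \(u_i\to0\) in \(L^p\). We also need \(\Theta>0\), which holds because \(\nu_j\le1\) implies \(\sum_j\nu_j^\theta\ge\sum_j\nu_j=1\).
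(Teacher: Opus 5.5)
Your argument follows the paper's exactly. The identification $\overline{\mathcal P}_1=\operatorname{span}\{\xi_1,\dots,\xi_{4n+4}\}$ reduces the corollary to Theorem~\ref{thm:improved-l} with $\ell=1$. You then argue by contradiction, normalize $\|u_i\|_{p^*}=1$, apply Lemma~\ref{lem:cc} with zero weak limit, and place the limit measure in $\mathcal M^c_\ell(S^{4n+3})$ by passing the moment conditions to the limit. Finally you run the chain $\Theta\le\sum_j\nu_j^{\theta}\le S_p^{QC}\sum_j\mu_j\le S_p^{QC}\mu(S^{4n+3})$.

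The one real defect is the closing step, which as written does not produce a contradiction. You define $\alpha=(S_p^{QC}/\Theta+\varepsilon)^{-1}$. The failed inequality then gives $\|\nabla_{qc}u_i\|_p^p<\alpha$, not $\le 1/\alpha$. The inequality you finally assert, $S_p^{QC}/\alpha<\Theta$, reads $S_p^{QC}(S_p^{QC}/\Theta+\varepsilon)<\Theta$. Its left side increases with $\varepsilon$, so it cannot follow from $\varepsilon>0$. The displayed identity involving $\frac{1}{1}$ and a zeroth power has no content. The correct ending is
\[
\Theta\le S_p^{QC}\,\mu(S^{4n+3})\le S_p^{QC}\,\alpha=\frac{S_p^{QC}\,\Theta}{S_p^{QC}+\varepsilon\Theta}<\Theta,
\]
which uses $\varepsilon>0$ and $\Theta\ge 1$. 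Your observation $\sum_j\nu_j^\theta\ge\sum_j\nu_j=1$ is exactly what makes the last inequality strict. Equivalently, set $\alpha=S_p^{QC}/\Theta+\varepsilon$ as the paper does; then $\int|\nabla_{qc}u_i|^p\,d\xi<1/\alpha$, and the chain gives $\alpha\le S_p^{QC}/\Theta$. With this one-line repair your proof is complete and coincides with the paper's.

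A minor caveat on your sketch of Lemma~\ref{lem:cc}, which the paper simply takes as given. Inequality \eqref{eq:FS-sphere} cannot hold literally on the compact sphere (test $u\equiv 1$). So the inequality you apply to $\phi u_i$ has to be a localized version carrying an extra lower-order term such as $C\int|\phi u_i|^p\,d\xi$. Since $u_i\to 0$ in $L^p$, this term disappears in the limit, and the atomic bound $\nu_j^{p/p^*}\le S_p^{QC}\mu_j$ is unaffected.
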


As an application, we give a new proof of the existence of extremals for the sharp Sobolev inequality
\begin{equation}\label{eq:sharp-sobolev}
\left(\int_{S^{4n+3}} |u|^{2^*}\,d\xi\right)^{2/2^*}
\le
S_2^{QC}\int_{S^{4n+3}} |\nabla_{qc}u|^2\,d\xi.
\end{equation}

\begin{Thm}\label{thm:extremal}
Let \(S^{4n+3}\) be the quaternionic unit sphere, and let
\[
2^*=\frac{2Q}{Q-2},
\qquad
Q=4n+6.
\]
Then the best constant in \eqref{eq:sharp-sobolev} is attained. Moreover, if \(\{u_i\}\) is a minimizing sequence, then there exist a subsequence \(\{u_{i_m}\}\) and a sequence \(\{\Phi_m\}\subset \mathrm{Aut}_{QC}(S^{4n+3})\) of quaternionic contact conformal automorphisms such that
\[
u_{i_m}^{\Phi_m}
=
|J_{\Phi_m}|^{1/2^*}\,\Phi_m^*u_{i_m}
\]
converges strongly in \(S^{1,2}(S^{4n+3})\), where \(J_{\Phi_m}\) denotes the Jacobian determinant of \(\Phi_m\) with respect to the surface measure \(d\xi\).
\end{Thm}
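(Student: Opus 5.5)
We will prove Theorem~\ref{thm:extremal} by combining the QC conformal invariance of \eqref{eq:sharp-sobolev} with Corollary~\ref{cor:first-moment} for \(p=2\). The argument has four steps.

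\emph{Step 1: invariance and normalization.} For \(\Phi\in \mathrm{Aut}_{QC}(S^{4n+3})\), the map \(u\mapsto u^\Phi=|J_\Phi|^{1/2^*}\Phi^*u\) preserves \(\|u\|_{L^{2^*}}\). It also preserves the conformally invariant energy
\[
E(u)=\int |\nabla_{qc}u|^2\,d\xi + c_n\int u^2\,d\xi ,
\]
where \(c_n\) is the constant appearing in the qc Yamabe operator on the sphere (\(c_n=4n(n+1)\) in the standard normalization). So the natural problem is to minimize \(E(u)\) subject to \(\|u\|_{2^*}=1\). I would first check that this formulation agrees with the paper's normalization of \(S_2^{QC}\), or else reformulate the minimizing-sequence notion in terms of \(E\).

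Take a minimizing sequence \(u_i\) with \(\|u_i\|_{2^*}=1\) and \(E(u_i)\to 1/S\). By a center-of-mass argument, choose \(\Phi_i\) so that \(v_i=u_i^{\Phi_i}\) satisfies
\[
\int \xi_j |v_i|^{2^*}\,d\xi=0 \quad\text{for all } j .
\]
The center-of-mass argument runs as follows. The qc automorphisms include dilations along the quaternionic Cayley transform pointing toward any \(x\in S^{4n+3}\). The map from the open unit ball \(\{(x,t)\}\) to its barycenter \(\int \xi\,|u^{\Phi_{x,t}}|^{2^*}\,d\xi\) extends continuously to the boundary as the identity, because concentration at \(x\) gives a barycenter near \(x\). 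Brouwer's theorem then gives a zero. This is standard, as in the Riemannian and CR cases.

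\emph{Step 2: improved inequality.} Apply Corollary~\ref{cor:first-moment} to \(v_i\) with \(p=2\) and \(\theta=(Q-2)/Q\). A probability measure on countably many points with vanishing first moments cannot be a single Dirac mass. Hence
\[
\Theta=\inf\sum \nu_i^\theta\ge 2^{1-\theta}>1 ,
\]
using concavity of \(t\mapsto t^\theta\) and the fact that each weight satisfies \(\nu_i\le 1\) and not all mass sits at one point. More precisely, if some \(\nu_1>1/2\), the remaining mass must balance, and one bounds \(\sum\nu_i^\theta\ge \nu_1^\theta+(1-\nu_1)^\theta\), with the minimum at the endpoint forced to be at least some value \(>1\) by the moment constraint. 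Choose \(\varepsilon\) small enough that \(S/\Theta+\varepsilon<S\).

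\emph{Step 3: weak limit is nonzero.} The sequence \(v_i\) is bounded in \(S^{1,2}\), so pass to a subsequence with \(v_i\rightharpoonup v\) weakly and \(v_i\to v\) in \(L^2\) by compactness of \(S^{1,2}\hookrightarrow L^2\). If \(v=0\), Step 2 gives
\[
1\le (S/\Theta+\varepsilon)\,\tfrac{1}{S}+o(1)<1 ,
\]
since \(\int|\nabla v_i|^2\to 1/S\) (the \(L^2\) part vanishes). This is a contradiction, so \(v\ne 0\).

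\emph{Step 4: strong convergence.} Write \(v_i=v+w_i\). Using Brezis--Lieb for the \(L^{2^*}\) norm and weak convergence for the energy,
\[
1=\|v\|_{2^*}^{2^*}+\|w_i\|_{2^*}^{2^*}+o(1),\qquad \tfrac1S=E(v)+E(w_i)+o(1).
\]
Combining these with the Sobolev inequality and strict concavity of \(t\mapsto t^{2/2^*}\), we get \(\|w_i\|_{2^*}\to 0\), hence \(\|v\|_{2^*}=1\), so \(v\) is an extremal. Then \(E(w_i)\to 0\), which gives strong convergence in \(S^{1,2}\). Alternatively, one can use Lemma~\ref{lem:cc}: the limiting measure has the form \(|v|^{2^*}+\sum\nu_j\delta_{x_j}\), and strict concavity excludes atoms once \(v\ne0\).

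The main obstacles are Step 1, specifically the continuity up to the boundary of the barycenter map for qc dilations, and the verification in Step 2 that \(\Theta(1;\theta,4n+3)>1\).
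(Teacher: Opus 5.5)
This is the paper's proof step for step: you balance the sequence with the Brouwer barycenter argument of Lemma~\ref{lem:qc-balance}, use Corollary~\ref{cor:first-moment} with \(\Theta(1;\frac{Q-2}{Q},4n+3)=2^{2/Q}>1\) to exclude a zero weak limit, and finish with Br\'ezis--Lieb and strict concavity of \(t\mapsto t^{2/2^*}\). Your switch to the invariant energy \(E\) is the correct formulation, not a detour: the paper's gradient-only quotient vanishes on constants and is not \(\mathrm{Aut}_{QC}\)-invariant, so its invariance claim holds only for \(E\), and Step 3 still goes through because the constant produced by localization in Lemma~\ref{lem:cc} can be taken to be the sharp constant of the \(E\)-inequality.

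The only soft spot is Step 2, where "not all mass at one point" does not by itself give a uniform bound above \(1\). The clean argument is to pair \(\int x\,d\nu=0\) with an atom \(x_1\), giving \(\nu_1=-\sum_{i\neq 1}\nu_i\,x_1\cdot x_i\le 1-\nu_1\); hence every weight is at most \(1/2\), and \(\sum_i\nu_i^\theta\ge 2^{1-\theta}\sum_i\nu_i=2^{1-\theta}\).
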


\section{Preliminaries}

In this section, we collect some standard facts which will be used later. Our main reference is \cite{MR4074054}.

Let
\[
\mathbb{H}^{n+1}
=
\left\{
(\zeta_1,\dots,\zeta_{n+1}) : \zeta_\ell=a_\ell+b_\ell \i+c_\ell \j+d_\ell \k,\ a_\ell,b_\ell,c_\ell,d_\ell\in\mathbb{R}
\right\}
\]
be the quaternionic vector space of dimension \(n+1\). The quaternionic units satisfy
\[
\i^2=\j^2=\k^2=-1,
\qquad
\i \j=\k,\quad \j \k=\i,\quad \k \i=\j,
\]
and hence
\[
\j\i=-\k,\qquad \k\j=-\i,\qquad \i\k=-\j.
\]

For \(\zeta\in \mathbb{H}^{n+1}\), we write
\[
\zeta=a+b\i+c\j+d\k,
\]
where \(a,b,c,d\in \mathbb{R}^{n+1}\). Fixing the complex structure determined by left multiplication by \(\i\), we may also write
\[
\zeta=(a+b\i)+(c+d\i)\j=z+w\,\j,
\qquad
z=a+b\i,\quad w=c+d\i,
\]
so that \(\mathbb{H}^{n+1}\simeq \mathbb{C}^{2n+2}\).

We endow \(\mathbb{H}^{n+1}\) with the standard quaternionic Hermitian product
\[
\langle \zeta,\eta\rangle=\sum_{\ell=1}^{n+1}\zeta_\ell \overline{\eta_\ell}.
\]
Under the identification \(\mathbb{H}^{n+1}\simeq \mathbb{R}^{4n+4}\), the unit sphere is
\[
S^{4n+3}
=
\left\{
\zeta\in \mathbb{H}^{n+1} : \langle \zeta,\zeta\rangle=1
\right\}.
\]
Its tangent space and horizontal space at \(\zeta\in S^{4n+3}\) are given by
\[
T_\zeta S^{4n+3}
=
\left\{
\eta\in \mathbb{H}^{n+1} : \Re \langle \zeta,\eta\rangle=0
\right\},
\]
and
\[
H_\zeta S^{4n+3}
=
\left\{
\eta\in \mathbb{H}^{n+1} : \langle \zeta,\eta\rangle=0
\right\}.
\]

Consider the vector fields induced by left multiplication,
\[
T_i(\zeta)=-\i\zeta,\qquad T_j(\zeta)=-\j\zeta,\qquad T_k(\zeta)=-\k\zeta.
\]
For example,
\[
-\i\zeta
=
-\i(a+b\i+c\j+d\k)
=
b-a\i+d\j-c\k,
\]
and therefore
\[
\Re\langle \zeta,-\i\zeta\rangle=0.
\]
Similarly,
\[
\Re\langle \zeta,-\j\zeta\rangle=0,
\qquad
\Re\langle \zeta,-\k\zeta\rangle=0.
\]
Hence \(T_i,T_j,T_k\) are tangent to \(S^{4n+3}\). They span the vertical distribution, and
\[
T_\zeta S^{4n+3}
=
H_\zeta S^{4n+3}\oplus \mathrm{span}\{T_i(\zeta),T_j(\zeta),T_k(\zeta)\}.
\]

The groups \(Sp(n+1)\) and \(\overline{Sp(1)}\) act by orthogonal transformations preserving the horizontal distribution. In particular,
\[
\overline{Sp(1)}\cdot Sp(n+1)
=
\overline{Sp(1)}\times Sp(n+1)\big/\mathbb{Z}_2
\]
is the compact subgroup of quaternionic contact isometries of \(S^{4n+3}\). We denote by
\[
\mathrm{Aut}_{QC}(S^{4n+3})
\]
the full quaternionic contact conformal automorphism group of \(S^{4n+3}\), which is larger than \(\overline{Sp(1)}\cdot Sp(n+1)\).

For a smooth function \(f\) on \(S^{4n+3}\), the vector field \(T_i\) acts as
\[
(T_i f)(\zeta)
=
\frac{d}{dt}\bigg|_{t=0} f(e^{-t\i}\zeta).
\]
Since
\[
e^{-t\i}\zeta
=
(\cos t-\i\sin t)(a+b\i+c\j+d\k)
\]
\[
=
(a\cos t+b\sin t)
+
(b\cos t-a\sin t)\i
+
(c\cos t+d\sin t)\j
+
(d\cos t-c\sin t)\k,
\]
we obtain
\[
(T_i f)(\zeta)
=
\sum_{\ell=1}^{n+1}
\left(
b_\ell \frac{\partial f}{\partial a_\ell}
-
a_\ell \frac{\partial f}{\partial b_\ell}
+
d_\ell \frac{\partial f}{\partial c_\ell}
-
c_\ell \frac{\partial f}{\partial d_\ell}
\right).
\]
In the same way,
\[
(T_j f)(\zeta)
=
\sum_{\ell=1}^{n+1}
\left(
c_\ell \frac{\partial f}{\partial a_\ell}
-
d_\ell \frac{\partial f}{\partial b_\ell}
-
a_\ell \frac{\partial f}{\partial c_\ell}
+
b_\ell \frac{\partial f}{\partial d_\ell}
\right),
\]
and
\[
(T_k f)(\zeta)
=
\sum_{\ell=1}^{n+1}
\left(
d_\ell \frac{\partial f}{\partial a_\ell}
+
c_\ell \frac{\partial f}{\partial b_\ell}
-
b_\ell \frac{\partial f}{\partial c_\ell}
-
a_\ell \frac{\partial f}{\partial d_\ell}
\right).
\]

Let \(\Delta_{\mathbb{R}^{d+1}}\) and \(\Delta_{S^d}\) denote the Laplacians on \(\mathbb{R}^{d+1}\) and \(S^d\), respectively. For any \(f\in C^\infty(\mathbb{R}^{d+1}\setminus\{0\})\), the expression in spherical coordinates is
\[
\Delta_{\mathbb{R}^{d+1}} f
=
\frac{\partial^2 f}{\partial r^2}
+
\frac{d}{r}\frac{\partial f}{\partial r}
+
\frac{1}{r^2}\Delta_{S^d}f.
\]
Equivalently,
\[
r^2\Delta_{\mathbb{R}^{d+1}} f
=
\left(r\frac{\partial}{\partial r}\right)^2 f
+
(d-1)\left(r\frac{\partial}{\partial r}\right)f
+
\Delta_{S^d}f.
\]

Now assume that \(f\) is a harmonic polynomial homogeneous of degree \(k\). Since
\[
r\frac{\partial}{\partial r}
=
\sum_i x^i \frac{\partial}{\partial x^i}
\]
is the Euler vector field, one has
\[
r\frac{\partial f}{\partial r}=kf.
\]
Hence
\[
0
=
r^2\Delta_{\mathbb{R}^{d+1}}f
=
k^2f+(d-1)kf+\Delta_{S^d}f,
\]
and therefore
\[
\Delta_{S^d}f=-k(k+d-1)f.
\]
Thus every \(k\)-homogeneous harmonic polynomial is an eigenfunction of \(\Delta_{S^d}\) with eigenvalue \(-k(k+d-1)\).

\section{Improved Sobolev Inequalities on the Quaternionic Sphere}

In this section we prove Theorem~\ref{thm:improved-jk}. We also record the analogous statement for polynomial constraints of degree at most \(\ell\), as well as the balanced first-moment case.

We continue to use the notation introduced in Section~1. In particular,
\[
\mathcal M_{j,k}^{c}(S^{4n+3}),\qquad
\mathcal M_{\ell}^{c}(S^{4n+3}),\qquad
\Theta(j,k;\theta,4n+3),\qquad
\Theta(\ell;\theta,4n+3)
\]
are defined as in the introduction.

The proof of Theorem~\ref{thm:improved-jk} relies on the following concentration--compactness lemma on the quaternionic sphere, which is a straightforward adaptation of Lemma A.0.1 in \cite{MR4694907}. We omit the proof.

\begin{Lem}\label{lem:cc}
Let \(\{u_m\}\) be a bounded sequence in \(S^{1,p}(S^{4n+3})\), where \(Q=4n+6\), \(1<p<Q\), and \(p^*=\frac{Qp}{Q-p}\). Assume that
\[
u_m \rightharpoonup u
\qquad
\text{weakly in } S^{1,p}(S^{4n+3}),
\]
and that, in the sense of measures,
\[
|u_m|^{p^*}\,d\xi \rightharpoonup |u|^{p^*}\,d\xi+\nu,
\qquad
|\nabla_{qc}u_m|^p\,d\xi \rightharpoonup |\nabla_{qc}u|^p\,d\xi+\sigma.
\]
Then there exist at most countably many points \(\{x_i\}\subset S^{4n+3}\) and nonnegative numbers \(\{\nu_i\}\), \(\{\sigma_i\}\) such that
\[
\nu=\sum_i \nu_i \delta_{x_i},
\qquad
\sigma \ge \sum_i \sigma_i \delta_{x_i},
\]
and
\[
\nu_i^{p/p^*}\le S_p^{QC}\sigma_i
\qquad
\text{for every } i.
\]
\end{Lem}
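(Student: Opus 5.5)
The plan is to follow P.-L. Lions' second concentration--compactness argument, with the quaternionic sphere Sobolev inequality \eqref{eq:FS-sphere} in place of the Euclidean one. The two analytic inputs are the following.
\begin{enumerate}
\item The compact embedding \(S^{1,p}(S^{4n+3})\hookrightarrow L^p(S^{4n+3})\). This is a Rellich-type statement for the horizontal Sobolev space on the compact quaternionic contact manifold. I would obtain it locally from the subelliptic (H\"ormander) estimates for the horizontal frame, or via the Cayley transform from the corresponding statement on \(\boldsymbol G\), and then patch with a partition of unity.
\item A Sobolev inequality valid for \(\varphi v\) with \(\varphi\in C^\infty(S^{4n+3})\). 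I would use \eqref{eq:FS-sphere}. If needed, I would replace it by its inhomogeneous version with an extra term \(C\int|\varphi v|^p\,d\xi\); this term will vanish in the limit anyway.
\end{enumerate}

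\textbf{Step 1: reduce to \(v_m=u_m-u\).} Set \(v_m=u_m-u\). Then \(v_m\rightharpoonup 0\) in \(S^{1,p}\) and, by item 1, \(v_m\to 0\) in \(L^p\). Passing to a subsequence, \(u_m\to u\) a.e. The Brezis--Lieb lemma gives
\[
|u_m|^{p^*}-|v_m|^{p^*}-|u|^{p^*}\to 0 \quad\text{in } L^1 .
\]
Hence \(|v_m|^{p^*}\,d\xi\rightharpoonup \nu\). Also, after a further subsequence, \(|\nabla_{qc}v_m|^p\,d\xi\rightharpoonup\lambda\) for some finite measure \(\lambda\).

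\textbf{Step 2: reverse H\"older inequality.} Fix \(\varphi\in C^\infty(S^{4n+3})\) and apply the Sobolev inequality to \(\varphi v_m\). Write \(\nabla_{qc}(\varphi v_m)=\varphi\nabla_{qc}v_m+v_m\nabla_{qc}\varphi\). The second term tends to \(0\) in \(L^p\) because \(v_m\to0\) in \(L^p\) and \(|\nabla_{qc}\varphi|\le|\nabla\varphi|\) is bounded. Letting \(m\to\infty\) gives
\[
\Bigl(\int|\varphi|^{p^*}\,d\nu\Bigr)^{p/p^*}\le S_p^{QC}\int|\varphi|^p\,d\lambda .
\]
By approximation this yields \(\nu(E)^{p/p^*}\le S_p^{QC}\lambda(E)\) for every Borel set \(E\). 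Thus \(\nu\ll\lambda\).

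\textbf{Step 3: \(\nu\) is purely atomic.} Let \(D=\{x:\lambda(\{x\})>0\}\); this set is countable since \(\lambda\) is finite. Off \(D\), differentiate \(\nu\) with respect to \(\lambda\) along shrinking balls \(B_r(x)\). The bound
\[
\frac{\nu(B_r(x))}{\lambda(B_r(x))}\le C\,\lambda(B_r(x))^{p^*/p-1}
\]
tends to \(0\) as \(r\to0\), because \(p^*>p\) and \(\lambda(B_r(x))\to 0\) at non-atoms. Hence the Radon--Nikodym density of \(\nu\) vanishes \(\lambda\)-a.e.\ off \(D\). Therefore \(\nu=\sum_i\nu_i\delta_{x_i}\) with \(\{x_i\}\subset D\).

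\textbf{Step 4: compare with \(\sigma\) rather than \(\lambda\).} I redo Step 2 with cutoffs \(\varphi_\varepsilon\), where \(0\le\varphi_\varepsilon\le1\), \(\varphi_\varepsilon(x_i)=1\), and \(\operatorname{supp}\varphi_\varepsilon\subset B_\varepsilon(x_i)\). The point is to bound \(\|\varphi_\varepsilon\nabla_{qc}v_m\|_p\) using the triangle inequality:
\[
\|\varphi_\varepsilon\nabla_{qc}v_m\|_p\le\|\varphi_\varepsilon\nabla_{qc}u_m\|_p+\|\varphi_\varepsilon\nabla_{qc}u\|_p .
\]
For fixed \(\varepsilon\), the cross term \(\int|\nabla_{qc}\varphi_\varepsilon|^p|v_m|^p\,d\xi\) still tends to \(0\) as \(m\to\infty\). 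Letting \(m\to\infty\) gives
\[
\Bigl(\int\varphi_\varepsilon^{p^*}d\nu\Bigr)^{1/p^*}\le (S_p^{QC})^{1/p}\Bigl[\Bigl(\int\varphi_\varepsilon^p(|\nabla_{qc}u|^p\,d\xi+\sigma)\Bigr)^{1/p}+\Bigl(\int\varphi_\varepsilon^p|\nabla_{qc}u|^p\,d\xi\Bigr)^{1/p}\Bigr].
\]
Now let \(\varepsilon\to0\). The terms involving \(|\nabla_{qc}u|^p\,d\xi\) vanish by absolute continuity. This leaves \(\nu_i^{p/p^*}\le S_p^{QC}\sigma(\{x_i\})\).

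Set \(\sigma_i=\sigma(\{x_i\})\). Since the points \(x_i\) are distinct, \(\sigma\ge\sum_i\sigma_i\delta_{x_i}\), which completes the argument.

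The step I expect to be the main obstacle is item 1 together with the precise form of the Sobolev inequality applied to \(\varphi v_m\). Inequality \eqref{eq:FS-sphere} as written fails for constants, so I would work with the version carrying the lower-order term \(C\int|\varphi v|^p\,d\xi\). The leading constant must remain \(S_p^{QC}\), which is what the lemma's conclusion \(\nu_i^{p/p^*}\le S_p^{QC}\sigma_i\) requires. Both the compact embedding and this constant would be transported from \eqref{eq:FS-group} on \(\boldsymbol G\) through the Cayley transform and localization.
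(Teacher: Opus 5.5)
Your proposal is correct and follows the argument the paper has in mind. The paper omits the proof, calling it a straightforward adaptation of Lions' concentration--compactness lemma (Lemma A.0.1 of \cite{MR4694907}), and your steps are exactly that adaptation: the reduction to \(v_m=u_m-u\) via Brezis--Lieb, the reverse H\"older inequality, the atomicity argument, and the triangle-inequality comparison in Step 4, which correctly avoids any Brezis--Lieb splitting of \(|\nabla_{qc}u_m|^p\) when \(p\neq2\) and works with ordinary cutoffs because \(v_m\to0\) in \(L^p\).

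One adjustment to your closing worry: you do not need a global inequality with leading constant exactly \(S_p^{QC}\) plus an \(L^p\) remainder, which is a much harder Hebey--Vaugon-type statement. Step 2 may use any constant. Step 4 only applies the inequality to functions supported in \(B_\varepsilon(x_i)\) before letting \(\varepsilon\to0\). So it suffices to have \(\|w\|_{p^*}^p\le S_p^{QC}(1+o(1))\|\nabla_{qc}w\|_p^p\) for \(\operatorname{supp}w\subset B_\varepsilon(x_i)\), with the \(o(1)\) taken as \(\varepsilon\to0\). This follows from \eqref{eq:FS-group} through a rotated Cayley transform: the conformal factor is almost constant on small balls, and its powers cancel exactly because \(Q/p^*=(Q-p)/p\).
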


We now give the proof of Theorem~\ref{thm:improved-jk}.

\begin{proof}[Proof of Theorem~\ref{thm:improved-jk}]
Set
\[
\theta=\frac{Q-p}{Q}=\frac{p}{p^*},
\qquad
\alpha
=
\frac{S_p^{QC}}{\Theta(j,k;\theta,4n+3)}+\varepsilon.
\]
Assume by contradiction that the conclusion is false. Then for every \(m\in \N\) there exists \(u_m\in S^{1,p}(S^{4n+3})\) such that
\begin{equation}\label{eq:jk-moment-constraint}
\int_{S^{4n+3}} g\,|u_m|^{p^*}\,d\xi=0
\qquad
\text{for all } g\in \overline{\mathcal P}_{j,k},
\end{equation}
and
\[
\left(\int_{S^{4n+3}} |u_m|^{p^*}\,d\xi\right)^{p/p^*}
>
\alpha \int_{S^{4n+3}} |\nabla_{qc}u_m|^p\,d\xi
+
m\int_{S^{4n+3}} |u_m|^p\,d\xi.
\]

By scaling, we may assume
\[
\int_{S^{4n+3}} |u_m|^{p^*}\,d\xi=1.
\]
Hence
\begin{equation}\label{eq:uniform-bounds}
\int_{S^{4n+3}} |\nabla_{qc}u_m|^p\,d\xi<\frac{1}{\alpha},
\qquad
\int_{S^{4n+3}} |u_m|^p\,d\xi<\frac{1}{m}.
\end{equation}
It follows that \(\{u_m\}\) is bounded in \(S^{1,p}(S^{4n+3})\), while \(u_m\to 0\) in \(L^p(S^{4n+3})\). Passing to a subsequence if necessary, we may assume
\[
u_m \rightharpoonup 0
\qquad
\text{weakly in } S^{1,p}(S^{4n+3}),
\]
and, in the sense of measures,
\begin{equation}\label{eq:measure-limits}
|u_m|^{p^*}\,d\xi \rightharpoonup \nu,
\qquad
|\nabla_{qc}u_m|^p\,d\xi \rightharpoonup \sigma.
\end{equation}

By Lemma~\ref{lem:cc}, there exist at most countably many points \(\{x_i\}\subset S^{4n+3}\) such that
\begin{equation}\label{eq:atomic-decomposition}
\nu=\sum_i \nu_i \delta_{x_i},
\qquad
\sigma \ge \sum_i \sigma_i \delta_{x_i},
\qquad
\nu_i^{\theta}\le S_p^{QC}\sigma_i.
\end{equation}
Moreover, from the normalization and \eqref{eq:uniform-bounds} we have
\begin{equation}\label{eq:mass-bounds}
\nu(S^{4n+3})=1,
\qquad
\sigma(S^{4n+3})\le \frac{1}{\alpha}.
\end{equation}

Now let \(g\in \overline{\mathcal P}_{j,k}\). Since \(g\) is continuous on the compact sphere, passing to the limit in \eqref{eq:jk-moment-constraint} gives
\[
\int_{S^{4n+3}} g\,d\nu
=
\lim_{m\to\infty}
\int_{S^{4n+3}} g\,|u_m|^{p^*}\,d\xi
=
0.
\]
Hence \(\nu\in \mathcal M_{j,k}^{c}(S^{4n+3})\). By the definition of \(\Theta(j,k;\theta,4n+3)\), together with \eqref{eq:atomic-decomposition} and \eqref{eq:mass-bounds}, we obtain
\[
\Theta(j,k;\theta,4n+3)
\le
\sum_i \nu_i^{\theta}
\le
S_p^{QC}\sum_i \sigma_i
\le
S_p^{QC}\sigma(S^{4n+3})
\le
\frac{S_p^{QC}}{\alpha}.
\]
Therefore
\[
\alpha \le \frac{S_p^{QC}}{\Theta(j,k;\theta,4n+3)},
\]
which contradicts the choice of
\[
\alpha=\frac{S_p^{QC}}{\Theta(j,k;\theta,4n+3)}+\varepsilon.
\]
The proof is complete.
\end{proof}

\begin{proof}[Proof of Theorem~\ref{thm:improved-l}]
The proof is identical to that of Theorem~\ref{thm:improved-jk}, with \(\overline{\mathcal P}_{j,k}\), \(\mathcal M_{j,k}^{c}(S^{4n+3})\), and \(\Theta(j,k;\theta,4n+3)\) replaced by \(\overline{\mathcal P}_{\ell}\), \(\mathcal M_{\ell}^{c}(S^{4n+3})\), and \(\Theta(\ell;\theta,4n+3)\), respectively.
\end{proof}

\begin{Rek}\label{rem:first-moment-balance}
When \(\ell=1\), the condition
\[
\int_{S^{4n+3}} g\,d\nu=0
\qquad
\text{for all } g\in \overline{\mathcal P}_{1}
\]
is equivalent to the balance condition
\[
\int_{S^{4n+3}} \xi_i\,d\nu=0,
\qquad
i=1,\dots,4n+4,
\]
where \(\xi_1,\dots,\xi_{4n+4}\) are the coordinate functions on \(\mathbb{R}^{4n+4}\).
\end{Rek}

\begin{proof}[Proof of Corollary~\ref{cor:first-moment}]
This is exactly Theorem~\ref{thm:improved-l} with \(\ell=1\).
\end{proof}

\begin{Rek}
The proof of Lemma~\ref{lem:cc} only uses the positivity of the sharp constant \(S_p^{QC}\). No explicit formula for \(S_p^{QC}\) is needed.
\end{Rek}

\begin{Rek}
If \(j+k=\ell\), then \(\mathcal P_{j,k}\subset \mathcal P_{\ell}\), and therefore
\[
\overline{\mathcal P}_{j,k}\subset \overline{\mathcal P}_{\ell}.
\]
Consequently,
\[
\mathcal M_{\ell}^{c}(S^{4n+3})
\subset
\mathcal M_{j,k}^{c}(S^{4n+3}),
\]
which implies
\[
\Theta\!\left(\ell;\frac{Q-p}{Q},4n+3\right)
\ge
\Theta\!\left(j,k;\frac{Q-p}{Q},4n+3\right).
\]
Hence
\[
\frac{S_p^{QC}}{\Theta\!\left(\ell;\frac{Q-p}{Q},4n+3\right)}
\le
\frac{S_p^{QC}}{\Theta\!\left(j,k;\frac{Q-p}{Q},4n+3\right)}.
\]
\end{Rek}

\begin{Rek}
Some exact values of \(\Theta(\ell;\theta,N-1)\) are
\[
\Theta(1;\theta,N-1)=2^{1-\theta},
\qquad
\Theta(2;\theta,N-1)=(N+1)^{1-\theta},
\]
proved in \cite{MR4452442}, and
\[
\Theta(3;\theta,N-1)=(2N)^{1-\theta},
\]
proved in \cite{2020Cubature}. It is also shown in \cite{2020Cubature} that \(\Theta(\ell;\theta,N-1)\) is attained by a finitely supported probability measure. In particular, the defining infimum is in fact a minimum.
\end{Rek}

\section{Existence of the Extremals}

We now turn to the existence of extremals for the sharp Sobolev inequality \eqref{eq:sharp-sobolev}. We begin by writing down an explicit family of quaternionic contact conformal automorphisms on the sphere, analogous to \cite[(B.1)]{FL10}.

Let
\[
\delta_\lambda(q,\omega)=(\lambda q,\lambda^2\omega),
\qquad
\lambda>0,
\]
denote the Heisenberg dilation on the quaternionic Heisenberg group \(\mathbb H^n\times \Im\mathbb H\).

Write a point on the sphere \(S^{4n+3}\) as
\[
\zeta=(\zeta',\zeta_{n+1}),
\qquad
\zeta'\in \mathbb H^n,\ \zeta_{n+1}\in \mathbb H.
\]
The quaternionic Cayley transform
\[
\mathcal C:\mathbb H^n\times \Im\mathbb H\to S^{4n+3}\setminus\{(0,\dots,0,-1)\}
\]
is given by
\[
\mathcal C(q,\omega)
=
\left(
\frac{2q}{1+|q|^2+\omega},
\,
\frac{1-|q|^2-\omega}{1+|q|^2+\omega}
\right),
\]
and its inverse is
\[
\mathcal C^{-1}(\zeta)
=
\left(
\frac{\zeta'}{1+\zeta_{n+1}},
\,
\Im \frac{1-\zeta_{n+1}}{1+\zeta_{n+1}}
\right).
\]

Let \(N=(0,\dots,0,1)\in S^{4n+3}\). Define
\[
\Gamma_{\lambda,N}
=
\mathcal C\circ \delta_\lambda\circ \mathcal C^{-1}.
\]
A direct computation yields
\[
\Gamma_{\lambda,N}(\zeta',\zeta_{n+1})
=
\left(
\frac{2\lambda\,\zeta'}
{1+\zeta_{n+1}+\lambda^2(1-\zeta_{n+1})},
\,
\frac{1+\zeta_{n+1}-\lambda^2(1-\zeta_{n+1})}
{1+\zeta_{n+1}+\lambda^2(1-\zeta_{n+1})}
\right).
\]

More generally, for any \(\xi\in S^{4n+3}\), choose \(A_\xi\in Sp(n+1)\) such that
\[
A_\xi(\xi)=N.
\]
We then define
\[
\Gamma_{\lambda,\xi}
=
A_\xi^{-1}\circ \Gamma_{\lambda,N}\circ A_\xi
=
A_\xi^{-1}\circ \mathcal C\circ \delta_\lambda\circ \mathcal C^{-1}\circ A_\xi.
\]
This yields the required family of quaternionic contact conformal automorphisms of \(S^{4n+3}\).

\begin{Lem}\label{lem:qc-balance}
Let \(r>0\), and let \(u\in L^{r}(S^{4n+3})\) be nontrivial. Then there exists
\(\Phi\in \mathrm{Aut}_{QC}(S^{4n+3})\) such that
\[
u^\Phi(\zeta)=|J_\Phi(\zeta)|^{1/r}u(\Phi(\zeta))
\]
satisfies
\[
\int_{S^{4n+3}} \zeta\,|u^\Phi(\zeta)|^{r}\,d\xi=0
\]
in \(\mathbb{R}^{4n+4}\), where \(S^{4n+3}\subset \mathbb{R}^{4n+4}\).
\end{Lem}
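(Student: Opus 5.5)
We follow the standard balancing argument of Onofri and Chang--Yang, as in \cite{FL10}, using the family \(\Gamma_{\lambda,\xi}\) constructed above. The starting point is the change of variables formula. For \(\Phi\in \mathrm{Aut}_{QC}(S^{4n+3})\) and any continuous \(F\),
\[
\int_{S^{4n+3}} F(\zeta)\,|u^\Phi(\zeta)|^r\,d\xi=\int_{S^{4n+3}} F(\Phi^{-1}(\eta))\,|u(\eta)|^r\,d\xi .
\]
Write \(d\mu=|u|^r d\xi/\|u\|_r^r\), which is a probability measure absolutely continuous with respect to \(d\xi\). The lemma then amounts to finding \(\Phi\) such that the center of mass \(\int \Phi^{-1}(\eta)\,d\mu(\eta)\) vanishes. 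It is more convenient to parametrize by the inverse maps. Since \(\delta_\lambda^{-1}=\delta_{1/\lambda}\), we have \(\Gamma_{\lambda,\xi}^{-1}=\Gamma_{1/\lambda,\xi}\). So we look for a point of the open ball \(B\subset\mathbb{R}^{4n+4}\) at which the map
\[
\Psi(t\xi)=\int_{S^{4n+3}} \Gamma_{\lambda(t),\xi}(\eta)\,d\mu(\eta),\qquad 0\le t<1,\ \xi\in S^{4n+3},
\]
vanishes. Here \(\lambda(t)\in(0,1]\) is a fixed decreasing function with \(\lambda(0)=1\) and \(\lambda(t)\to0\) as \(t\to1\).

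\textbf{Step 1: well-definedness and continuity.} First we must check that \(\Gamma_{\lambda,\xi}\) does not depend on the choice of \(A_\xi\). The explicit formula for \(\Gamma_{\lambda,N}\) shows that it commutes with the stabilizer of \(N\) in \(Sp(n+1)\), namely the maps \((\zeta',\zeta_{n+1})\mapsto(U\zeta',\zeta_{n+1})\) with \(U\in Sp(n)\). This holds because the formula involves \(\zeta'\) only linearly and \(\zeta_{n+1}\) only in scalar expressions. Hence \(\Gamma_{\lambda,\xi}\) is well defined. It depends continuously on \((\lambda,\xi)\), and at \(t=0\) it is the identity. Therefore \(\Psi\) is continuous on \(B\), by dominated convergence, since the integrand is bounded by \(1\).

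\textbf{Step 2: boundary behaviour.} This is the key step. From the formula, as \(\lambda\to0\) we get \(\Gamma_{\lambda,N}(\zeta)\to N\) for every \(\zeta\neq -N\). Indeed, the denominator tends to \(1+\zeta_{n+1}\). This quaternion is nonzero when \(\zeta\neq-N\), because \(|\zeta_{n+1}|\le1\) with equality and \(\zeta_{n+1}=-1\) only at \(-N\). The last component then tends to \(1\), and the first component tends to \(0\).

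We need this convergence uniformly in \(\xi\). For a fixed \(\delta>0\), the convergence is uniform on \(\{\zeta:|1+\zeta_{n+1}|\ge\delta\}\). After conjugation, this says \(\Gamma_{\lambda,\xi}(\eta)\to\xi\) uniformly on the set of \(\eta\) at distance at least \(\delta'\) from \(-\xi\), uniformly in \(\xi\).

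Because \(\mu\) is absolutely continuous, \(\sup_\xi \mu(B(-\xi,\delta'))\to0\) as \(\delta'\to0\). Combining this with the bound \(|\Gamma|\le1\), we obtain
\[
\Psi(t\xi)\to \xi
\]
uniformly in \(\xi\) as \(t\to1\). Consequently \(\Psi\) extends continuously to \(\overline B\) with \(\Psi|_{\partial B}=\mathrm{id}\).

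\textbf{Step 3: topological conclusion.} Suppose \(\Psi\) had no zero in \(\overline B\). Then \(\Psi/|\Psi|\) would be a retraction of \(\overline B\) onto \(S^{4n+3}\), which is impossible by Brouwer's theorem. Equivalently, the map \(s\mapsto \Psi(s\,\cdot)/|\Psi(s\,\cdot)|\) would give a homotopy from a constant map to the identity of the sphere. Hence \(\Psi(t_0\xi_0)=0\) for some \(t_0<1\); the point cannot lie on \(\partial B\), where \(\Psi\) is the identity. Setting \(\Phi=\Gamma_{\lambda(t_0),\xi_0}^{-1}\) gives the claimed balance condition.

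The main obstacle is the uniformity in Step 2. It requires quantitative control of \(|1+\zeta_{n+1}|\) away from \(-N\), together with the fact that \(\mu\) gives no mass to points. Both follow from absolute continuity and the compactness of \(S^{4n+3}\).
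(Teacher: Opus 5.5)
Your proof is correct and follows essentially the same route as the paper. Both normalize $|u|^r$ to a probability density and build the continuous map $t\xi\mapsto\int\Gamma_{\lambda(t),\xi}\,d\mu$ on the ball; the paper takes $\lambda=1-t$. Both then show the map tends to the identity uniformly on $\partial B$, using absolute continuity of $\mu$ and uniform convergence away from $-\xi$, and finish with the no-retraction theorem and the change of variables with $\Phi=\Gamma^{-1}$.

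Two of your additions usefully fill in points the paper leaves implicit. You check that $\Gamma_{\lambda,\xi}$ is independent of the choice of $A_\xi$, via commutation with the $Sp(n)$ stabilizer of $N$. You also note that the zero cannot lie on $\partial B$, so $\lambda(t_0)>0$ and $\Gamma_{\lambda(t_0),\xi_0}$ is defined.
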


\begin{proof}
Set
\[
f(\zeta)=\frac{|u(\zeta)|^{r}}{\int_{S^{4n+3}}|u|^{r}\,d\xi}.
\]
Then \(f\ge 0\), \(f\in L^{1}(S^{4n+3})\), and
\[
\int_{S^{4n+3}} f\,d\xi=1.
\]

Let \(\Gamma_{\lambda,\xi}\in \mathrm{Aut}_{QC}(S^{4n+3})\), with \(\lambda>0\) and \(\xi\in S^{4n+3}\), denote the standard family of quaternionic contact conformal automorphisms obtained by conjugating Heisenberg dilations with the quaternionic Cayley transform. We only use the following properties:

\begin{itemize}
\item[(i)] \(\Gamma_{1,\xi}=\mathrm{id}_{S^{4n+3}}\) for every \(\xi\in S^{4n+3}\);
\item[(ii)] for every fixed \(\xi\in S^{4n+3}\),
\[
\Gamma_{\lambda,\xi}(\zeta)\to \xi
\qquad
\text{as }\lambda\downarrow 0
\]
for every \(\zeta\neq -\xi\), and the convergence is uniform on compact subsets of \(S^{4n+3}\setminus\{-\xi\}\);
\item[(iii)] the map \((\lambda,\xi,\zeta)\mapsto \Gamma_{\lambda,\xi}(\zeta)\) is continuous on its domain.
\end{itemize}

Let
\[
B=\{x\in \mathbb R^{4n+4}:|x|<1\},
\qquad
\overline B=\{x\in \mathbb R^{4n+4}:|x|\le 1\}.
\]
Define \(F:B\to \mathbb R^{4n+4}\) by
\[
F(r\xi)=\int_{S^{4n+3}}\Gamma_{1-r,\xi}(\zeta)\,f(\zeta)\,d\xi(\zeta),
\qquad
0<r<1,\ \xi\in S^{4n+3}.
\]
Since \(\Gamma_{1-r,\xi}(\zeta)\in S^{4n+3}\) for every \(\zeta\), \(F(r\xi)\) is a convex combination of points on the sphere. Hence
\[
F(B)\subset \overline B.
\]

Since \(\Gamma_{1,\xi}=\mathrm{id}_{S^{4n+3}}\), we have
\[
\lim_{r\downarrow 0}F(r\xi)
=
\int_{S^{4n+3}}\zeta\,f(\zeta)\,d\xi(\zeta),
\]
which is independent of \(\xi\). Therefore \(F\) extends continuously to the origin by setting
\[
F(0):=\int_{S^{4n+3}}\zeta\,f(\zeta)\,d\xi(\zeta).
\]

We next examine the behavior of \(F\) near \(\partial B\). Fix \(\varepsilon>0\). Since \(f\in L^{1}(S^{4n+3})\), there exists \(\rho>0\) such that
\[
\int_E f\,d\xi<\varepsilon
\]
whenever \(E\subset S^{4n+3}\) is measurable and \(|E|<\rho\). For \(\eta>0\) and \(\xi\in S^{4n+3}\), let
\[
E_{\eta,\xi}
=
\{\zeta\in S^{4n+3}:|1+\langle \zeta,\xi\rangle|<\eta\}.
\]
By rotational invariance of the surface measure, \(|E_{\eta,\xi}|\) is independent of \(\xi\). Thus, choosing \(\eta>0\) sufficiently small, we may arrange that \(|E_{\eta,\xi}|<\rho\) for every \(\xi\in S^{4n+3}\), and hence
\[
\int_{E_{\eta,\xi}} f\,d\xi<\varepsilon
\qquad
\text{for every }\xi\in S^{4n+3}.
\]

Now
\[
\begin{aligned}
|F(r\xi)-\xi|
&\le
\int_{E_{\eta,\xi}}
|\Gamma_{1-r,\xi}(\zeta)-\xi|\,f(\zeta)\,d\xi(\zeta)
+
\int_{S^{4n+3}\setminus E_{\eta,\xi}}
|\Gamma_{1-r,\xi}(\zeta)-\xi|\,f(\zeta)\,d\xi(\zeta) \\
&\le
2\int_{E_{\eta,\xi}} f\,d\xi
+
\sup_{\zeta\in S^{4n+3}\setminus E_{\eta,\xi}}
|\Gamma_{1-r,\xi}(\zeta)-\xi|.
\end{aligned}
\]
The first term is bounded by \(2\varepsilon\). For the second term, note that
\[
\{(\xi,\zeta)\in S^{4n+3}\times S^{4n+3}:|1+\langle \zeta,\xi\rangle|\ge \eta\}
\]
is compact and avoids the pole \(\zeta=-\xi\). Hence, by property (ii) together with continuity, we have
\[
\sup_{\substack{\xi\in S^{4n+3}\\ \zeta\in S^{4n+3}\setminus E_{\eta,\xi}}}
|\Gamma_{1-r,\xi}(\zeta)-\xi|
\to 0
\qquad
\text{as }r\uparrow 1.
\]
Therefore
\[
\lim_{r\uparrow 1}F(r\xi)=\xi
\]
uniformly in \(\xi\in S^{4n+3}\). It follows that \(F\) extends continuously to \(\overline B\) and satisfies
\[
F(x)=x
\qquad
\text{for every }x\in \partial B=S^{4n+3}.
\]

We claim that \(0\in F(\overline B)\). Suppose not. Then
\[
R(x)=\frac{F(x)}{|F(x)|}
\]
defines a continuous map \(R:\overline B\to \partial B\). Since \(F(x)=x\) for \(x\in \partial B\), we also have \(R(x)=x\) on \(\partial B\). Thus \(R\) is a retraction of \(\overline B\) onto \(\partial B\), which is impossible. Hence there exists \(x_0=r_0\xi_0\in \overline B\) such that
\[
F(x_0)=0.
\]

Let
\[
\Gamma=\Gamma_{1-r_0,\xi_0},
\qquad
\Phi=\Gamma^{-1}.
\]
Then
\[
\int_{S^{4n+3}}\Gamma(\zeta)\,f(\zeta)\,d\xi(\zeta)=0.
\]

Finally, using the change of variables \(\eta=\Phi(\zeta)\), we obtain
\[
\begin{aligned}
\int_{S^{4n+3}}\zeta\,|u^\Phi(\zeta)|^r\,d\xi(\zeta)
&=
\int_{S^{4n+3}}\zeta\,|J_\Phi(\zeta)|\,|u(\Phi(\zeta))|^r\,d\xi(\zeta) \\
&=
\int_{S^{4n+3}}\Phi^{-1}(\eta)\,|u(\eta)|^r\,d\xi(\eta) \\
&=
\int_{S^{4n+3}}\Gamma(\eta)\,|u(\eta)|^r\,d\xi(\eta) \\
&=
\left(\int_{S^{4n+3}}|u|^r\,d\xi\right)
\int_{S^{4n+3}}\Gamma(\eta)\,f(\eta)\,d\xi(\eta) \\
&=0.
\end{aligned}
\]
This proves the lemma.
\end{proof}

\begin{proof}[Proof of Theorem~\ref{thm:extremal}]
Let \(\{u_m\}\subset S^{1,2}(S^{4n+3})\) be a minimizing sequence for \eqref{eq:sharp-sobolev}. After normalization, we may assume that
\[
\|u_m\|_{L^{2^*}(S^{4n+3})}=1
\qquad
\text{for every } m,
\]
and
\[
\int_{S^{4n+3}} |\nabla_{qc}u_m|^2\,d\xi
\longrightarrow
\frac{1}{S_2^{QC}}.
\]

By Lemma~\ref{lem:qc-balance} with \(r=2^*\), for each \(m\) there exists
\(\Phi_m\in \mathrm{Aut}_{QC}(S^{4n+3})\) such that
\[
v_m
=
u_m^{\Phi_m}
=
|J_{\Phi_m}|^{1/2^*}\,\Phi_m^*u_m
\]
satisfies the balance condition
\[
\int_{S^{4n+3}} \zeta\,|v_m(\zeta)|^{2^*}\,d\xi=0.
\]
Since the sharp Sobolev quotient is invariant under quaternionic contact conformal automorphisms, \(\{v_m\}\) is still a minimizing sequence. In particular,
\[
\|v_m\|_{L^{2^*}(S^{4n+3})}=1
\qquad
\text{for every } m,
\]
and
\[
\int_{S^{4n+3}} |\nabla_{qc}v_m|^2\,d\xi
\longrightarrow
\frac{1}{S_2^{QC}}.
\]

Passing to a subsequence, still denoted by \(\{v_m\}\), we may assume that
\[
v_m \rightharpoonup u
\qquad
\text{weakly in } S^{1,2}(S^{4n+3}).
\]
Since \(S^{4n+3}\) is compact, the embedding
\[
S^{1,2}(S^{4n+3}) \hookrightarrow L^2(S^{4n+3})
\]
is compact. Therefore
\[
v_m\to u
\qquad
\text{strongly in } L^2(S^{4n+3}).
\]
After passing to a further subsequence if necessary, we may also assume that
\[
v_m(\zeta)\to u(\zeta)
\qquad
\text{for a.e. }\zeta\in S^{4n+3}.
\]

We first show that \(u\neq 0\). By Corollary~\ref{cor:first-moment} with \(p=2\),
\[
\|v_m\|_{L^{2^*}(S^{4n+3})}^{2}
\le
\left(
\frac{S_2^{QC}}{\Theta\!\left(1;\frac{Q-2}{Q},4n+3\right)}
+\varepsilon
\right)
\int_{S^{4n+3}} |\nabla_{qc}v_m|^2\,d\xi
+
C(\varepsilon)\int_{S^{4n+3}} |v_m|^2\,d\xi.
\]
Since
\[
\Theta\!\left(1;\frac{Q-2}{Q},4n+3\right)=2^{2/Q}>1,
\]
we may choose \(\varepsilon>0\) so small that
\[
A_\varepsilon
:=
\frac{S_2^{QC}}{\Theta\!\left(1;\frac{Q-2}{Q},4n+3\right)}
+\varepsilon
<
S_2^{QC}.
\]
Hence
\[
1
\le
A_\varepsilon \int_{S^{4n+3}} |\nabla_{qc}v_m|^2\,d\xi
+
C(\varepsilon)\int_{S^{4n+3}} |v_m|^2\,d\xi.
\]
Letting \(m\to\infty\), and using the strong convergence in \(L^2(S^{4n+3})\), we obtain
\[
1
\le
\frac{A_\varepsilon}{S_2^{QC}}
+
C(\varepsilon)\int_{S^{4n+3}} |u|^2\,d\xi.
\]
Since \(A_\varepsilon<S_2^{QC}\), it follows that
\[
\int_{S^{4n+3}} |u|^2\,d\xi
\ge
\frac{1-A_\varepsilon/S_2^{QC}}{C(\varepsilon)}
>0.
\]
Therefore \(u\neq 0\).

We next show that \(u\) is an extremal. By the Brézis--Lieb lemma,
\[
1
=
\|v_m\|_{L^{2^*}(S^{4n+3})}^{2^*}
=
\|u\|_{L^{2^*}(S^{4n+3})}^{2^*}
+
\|v_m-u\|_{L^{2^*}(S^{4n+3})}^{2^*}
+
o(1).
\]
Set
\[
a=\|u\|_{L^{2^*}(S^{4n+3})}^{2^*}\in (0,1].
\]
Then
\[
\|v_m-u\|_{L^{2^*}(S^{4n+3})}^{2^*}\to 1-a.
\]

Moreover, since \(v_m\rightharpoonup u\) weakly in \(S^{1,2}(S^{4n+3})\), we have the energy splitting
\[
\int_{S^{4n+3}} |\nabla_{qc}v_m|^2\,d\xi
=
\int_{S^{4n+3}} |\nabla_{qc}u|^2\,d\xi
+
\int_{S^{4n+3}} |\nabla_{qc}(v_m-u)|^2\,d\xi
+
o(1).
\]
Applying \eqref{eq:sharp-sobolev} to \(u\) and to \(v_m-u\), we obtain
\[
\|u\|_{L^{2^*}(S^{4n+3})}^{2}
\le
S_2^{QC}\int_{S^{4n+3}} |\nabla_{qc}u|^2\,d\xi,
\]
and
\[
\|v_m-u\|_{L^{2^*}(S^{4n+3})}^{2}
\le
S_2^{QC}\int_{S^{4n+3}} |\nabla_{qc}(v_m-u)|^2\,d\xi.
\]
Therefore,
\[
\begin{aligned}
1
&=
\lim_{m\to\infty}
S_2^{QC}\int_{S^{4n+3}} |\nabla_{qc}v_m|^2\,d\xi \\
&=
S_2^{QC}\int_{S^{4n+3}} |\nabla_{qc}u|^2\,d\xi
+
\lim_{m\to\infty}
S_2^{QC}\int_{S^{4n+3}} |\nabla_{qc}(v_m-u)|^2\,d\xi \\
&\ge
\|u\|_{L^{2^*}(S^{4n+3})}^{2}
+
\lim_{m\to\infty}\|v_m-u\|_{L^{2^*}(S^{4n+3})}^{2} \\
&=
a^{2/2^*}+(1-a)^{2/2^*}.
\end{aligned}
\]
Since the function \(t\mapsto t^{2/2^*}\) is strictly concave on \([0,1]\), one has
\[
a^{2/2^*}+(1-a)^{2/2^*}>1
\qquad
\text{for every } a\in(0,1).
\]
Thus the above inequality forces \(a=1\). Hence
\[
\|u\|_{L^{2^*}(S^{4n+3})}=1
\qquad
\text{and}
\qquad
\|v_m-u\|_{L^{2^*}(S^{4n+3})}\to 0.
\]

Finally,
\[
1
=
\|u\|_{L^{2^*}(S^{4n+3})}^{2}
\le
S_2^{QC}\int_{S^{4n+3}} |\nabla_{qc}u|^2\,d\xi
\le
S_2^{QC}\liminf_{m\to\infty}\int_{S^{4n+3}} |\nabla_{qc}v_m|^2\,d\xi
=1.
\]
Therefore equality holds throughout, and \(u\) is an extremal for \eqref{eq:sharp-sobolev}. In particular,
\[
\int_{S^{4n+3}} |\nabla_{qc}u|^2\,d\xi=\frac{1}{S_2^{QC}}.
\]
Combining this with the energy splitting, we conclude that
\[
\int_{S^{4n+3}} |\nabla_{qc}(v_m-u)|^2\,d\xi\to 0.
\]
Together with the strong convergence \(v_m\to u\) in \(L^2(S^{4n+3})\), this implies
\[
v_m\to u
\qquad
\text{strongly in } S^{1,2}(S^{4n+3}).
\]

Recalling that \(v_m=u_m^{\Phi_m}\), the proof is complete.
\end{proof}

\section*{Acknowledgments}

\medskip
{\bf Funding:} This work is supported by National Natural Science Foundation of China (12301145, 12261107, 12561020) and Yunnan Fundamental Research Projects (202301AU070144, 202401AU070123).

\medskip
{\bf Author Contributions:} All the authors wrote the main manuscript text together and these authors contributed equally to this work.

\medskip
{\bf Data availability:}  Data sharing is not applicable to this article as no new data were created or analyzed in this study.

\medskip
{\bf Conflict of Interests:} The authors declare that there is no conflict of interest.

\bibliographystyle{plain} 
\bibliography{reference} 
\end{document}